\def\ps@pprintTitle{%
 \let\@oddhead\@empty
 \let\@evenhead\@empty
 \def\@oddfoot{\centerline{\thepage}}%
 \let\@evenfoot\@oddfoot}
\newcolumntype{C}{>{\centering}X}
\def\-{\raisebox{.75pt}{-}}
\newcommand{\mydots}{\ifmmode\mathinner{\ldotp\kern-0.2em\ldotp\kern-0.2em\ldotp}\else.\kern-0.13em.\kern-0.13em.\fi}
\newlength\nextcharwidth
\newcommand{\var}{\mathbb{V}}
\newcommand{\esp}{\mathbb{E}}
\newcommand{\ZYB}{ZY\!B}
\DeclareMathOperator*{\argmax}{arg\,max} 
\newcommand{\mathsout}[1]
{\bgroup\mathchoice
  {\sbox0{$\displaystyle{#1}$}%
    \usebox0\hspace{-\wd0}%
    \rule[0.5\ht0-0.5\dp0-.5pt]{\wd0}{1pt}}%
  {\sbox0{$\textstyle{#1}$}%
    \usebox0\hspace{-\wd0}%
    \rule[0.5\ht0-0.5\dp0-.5pt]{\wd0}{1pt}}%
  {\sbox0{$\scriptstyle{#1}$}%
    \usebox0\hspace{-\wd0}%
    \rule[0.5\ht0-0.5\dp0-.5pt]{\wd0}{1pt}}%
  {\sbox0{$\scriptscriptstyle{#1}$}%
    \usebox0\hspace{-\wd0}%
    \rule[0.5\ht0-0.5\dp0-.5pt]{\wd0}{1pt}}%
\egroup}
\newlength\myindent
\newtheorem{proposition}{Proposition}
\renewcommand{\fnum@algorithm}{\fname@algorithm}
\begin{document}
\title{Inverting Regional Sensitivity Analysis to reveal sensitive model behaviors}









\author[mis]{S\'ebastien Roux \footnote{corresponding author email : sebastien.roux@inrae.fr}}

\author[mis]{Patrice Loisel}

\author[emm]{Samuel Buis}

\address[mis]{MISTEA, Univ Montpellier, INRAE, Institut Agro, Montpellier, France}
\address[emm]{EMMAH, INRAE, Universit\'e d'Avignon et des Pays de Vaucluse, Avignon, France}

\maketitle



\section*{Abstract}
{

We address the question of sensitivity analysis for model outputs of any dimension using Regional Sensitivity Analysis (RSA). Classical RSA computes sensitivity indices related to the impact of model inputs variations on the occurrence of a target region of the model output space. In this work, we invert this perspective by proposing to find, for a given target model input, the region whose occurrence is best explained by the variations of this input. When it exists, this region can be seen as a model behavior which is particularly sensitive to the variations of the model input under study. We name this method iRSA (for inverse RSA).
\newline
iRSA is formalized as an optimization problem using region-based sensitivity indices  and solved using dedicated numerical algorithms. Using analytical and numerical examples, including an environmental model producing time series, we show that iRSA can provide a new graphical and interpretable characterization of sensitivity for model outputs of various dimensions.
}

\vspace{1cm}
\textit{Keywords}: Multivariate sensitivity analysis, Target sensitivity analysis,  Sobol' indices, Cluster-based GSA, Graphical sensitivity analysis

\newpage

\section{Introduction}

The analysis of models with complex outputs (temporal, spatial, heterogeneous) is one of the current challenges of sensitivity analysis  \cite{marrel2016sensitivity,saltelli2021sensitivity}.
There are two main approaches to handle multidimensional outputs. The first one aims at computing a set of scalar indices associated to coordinates along a basis of functions on which the model outputs are projected (\cite{campbell06,lamboni2009multivariate,xiao2018multivariate}). Its flexibility and interpretability are limited by the choice of the basis. The second approach summarizes the impact of the model inputs on the  variability of all the model outputs using a single index \cite{lamboni2011multivariate,gamboa2014,lamboni2019multivariate,xu2019generalized}. It provides useful aggregated indices but does not allow a fine understanding of model outputs sensitivity.

Spear and Hornberger  introduced in \cite{spear1980eutrophication} the concept of model behavior in sensitivity analysis through the Regional Sensitivity Analysis (RSA) approach. The principle is to start from the definition of a target region of the output space (denoted as "behavioral") and to analyze the impact of the variations of model inputs on its occurrence. Using model behaviors expressed as regions of the output space appears to be an efficient method to get interpretable characterizations of model properties. It has also the property to scale to any dimension of the output space \cite{roux2021cluster}. Among the last developments on RSA, two are of particular interest in the present study:
i) the application of RSA  in the context of reliability engineering to characterize parameter sensitivity in relation to a critical domain of the output space (e.g. the failure domain of a system) using sensitivity measures compatible with rare events and taking into account interactions (Target SA, \cite{marrel2021statistical,idrissi2021developments}), ii) its application in combination with a clustering procedure in order to characterize parameter sensitivity with respect to the dominant model behaviors detected in the model output space (Distance-Based Generalized SA \cite{fenwick2014quantifying}, Cluster-based GSA, \cite{roux2021cluster}). 

These approaches  rely on an a priori characterization of the behaviors (regions of the output space) to be analyzed. They are identified either by experts or automatic clustering of the simulations. 
In this work, we propose a new perspective on the link between behaviors and sensitivity analysis based on an extension of the above mentioned developments on RSA. Instead of trying to a priori characterize target regions of the output space, we propose to use an optimization procedure in order to reveal the regions of the output space the most sensitive to the variations of a given input, i.e. whose occurrences are the best explained by the variations of this input. We name this approach iRSA (for inverse Regional Sensitivity Analysis). The formalization of iRSA in terms of principle and numerical algorithms will be presented in Section \ref{sec_method}. The results of the method application are presented in Section \ref{sec_results} on three examples: a model with scalar outputs which is analytically solved and two models on which numerical algorithms are tested, a model with 2D outputs and an environmental model producing time series.

\section{iRSA methods}
\label{sec_method}
In this section, we formalize iRSA in a general context, with a focus on the  sensitivity-based optimization criteria that can be used and on their minimization in the context of multivariate outputs.

\subsection{Notations}
We consider a model $f$ whose inputs are noted $X_i$, with $i \in [1,n]$ and have independent distributions. The model output is noted $Y=f(X_1,..,X_n)$ and can be multivariate.

Model behaviors in the context of RSA are defined as regions of the model output space. Depending on the formalization context, these regions are continuous sets (as we will see in Section \ref{sec_res_1d}) or discrete set (in Sections \ref{sec_res_2d} and \ref{sec_res_nd}). In the latter case, they are ensembles of simulated points and we will denote them as clusters of model outputs. In the following, we generically denote by $\mathcal{C} $ the  model output space.

\subsection{iRSA principle}
The principle of iRSA is to find for each input factor $X_i$ a partition of the  model output space $\mathcal{C}$ that maximizes a cost function related to parameter sensitivity. We will be interested in either finding the region $C$ whose occurrence is the most influenced by a given input, in which case iRSA will provide the partition $(C,\bar{C})$ (where $\bar{C}$ is the complement of $C$), or the two non-overlapping regions $C$ and $C'$ whose transition from the one to the other is the most influenced by a given input, in which case iRSA will provide the partition made of three regions $(C,C',\overline{C \cup C'})$.

\subsection{Main criteria and associated interpretations}
We consider the region-based sensitivity indices introduced in \cite{roux2021cluster}. These indices use membership functions which characterize the level of membership of any point of the output space to a given region $C$. In this work, we consider binary membership functions that can be expressed using the indicator function $\mathbb{1}_C(.)$. 

The idea behind these indices is simply to build sensitivity indices from the standard Sobol' indices \cite{sobol1993,saltelli2000sensitivity} obtained when transforming the multivariate output into a scalar one using the  membership functions.
\subsubsection{Indices based on single region membership}
The first and simplest region-based indices that can be used for an input $X_i$ are the Sobol' indices (first or total: $ S\!I_{i}^C$ and $T\!S\!I_{i}^C$) associated to the membership function of a region $C$ defined over the model output space $\mathcal{C}$. 
\begin{align*}
& S\!I_{i}^C=\frac{\var\left[\esp \left[\mathbb{1}_C(Y) | X_i \right] \right]}{\var\left[\mathbb{1}_C(Y)  \right]}\\
& T\!S\!I_{i}^C=1-\frac{\var\left[\esp \left[ \mathbb{1}_C(Y) | {X}_{\sim i}\right]\right]}{\var\left[\mathbb{1}_C(Y) \right]} 
\end{align*}

In that case, iRSA aims at finding for each input $X_i$ a partition  $(C_i^*,\overline{C_i^*})$ so that $C_i^*$ maximizes either the first or the total region-based Sobol' index of input $X_i$.

By definition, the partition  $(C_i^*,\overline{C_i^*})$ defines the two behaviors whose transition from the one to the other is the most influenced by the model input $X_i$. $C_i^*$ (or equivalently $\overline{C_i^*}$) can also be seen as the region whose occurrence is best explained by the variations of this input.


The optimization problems can be written :
\begin{align*}
&C_i^*=\argmax_{C \in \mathcal{C} }   S\!I_i^C  &( S\!I_i^C \text{based iRSA)} \\
&C_i^*=\argmax_{C \in \mathcal{C} }  \left( T\!S\!I_i^C\right)  &( T\!S\!I_i^C \text{based iRSA)}
\end{align*}

\subsubsection{Indices based on cluster membership differences}
A extension of these indices was also proposed in \cite{roux2021cluster} by considering Sobol' indices on differences of membership functions. 
\begin{align*}
& S\!I_{i}^{CC'}=\frac{\var\left[\esp \left[\mathbb{1}_C(Y)-\mathbb{1}_{C'}(Y) | X_j \right] \right]}{\var\left[\mathbb{1}_C(Y)- \mathbb{1}_{C'}(Y) \right]}\\
& T\!S\!I_{i}^{CC'}=1-\frac{\var\left[\esp \left[ \mathbb{1}_C(Y)- \mathbb{1}_{C'}(Y) | {X}_{\sim i}\right]\right]}{\var\left[\mathbb{1}_C(Y)-\mathbb{1}_{C'}(Y) \right]} 
\end{align*}

In that case, iRSA aims at finding for each input $X_i$ a partition $(C_i^*,C_i'^*,\overline{C_i^* \cup C_i'^*})$ so that that $(C_i^*,C_i'^*)$ maximizes either the first or the total region-based Sobol' index of input $X_i$ based on membership function differences. The partition  $(C_i^*,C_i'^*,\overline{C_i^* \cup C_i'^*})$  defines the two behaviors $(C_i^*,C_i'^*)$ giving the main direction of variation influenced by the model input $X_i$.

The optimization problems can be written :
\begin{align*}
& (C^{1*}_{i},C^{2*}_{i})=\argmax_{C,C' \in \mathcal{C}, C\cap C'=\emptyset }  \left( S\!I_i^{CC'}\right)  & (S\!I_i^{CC'}\text{based iRSA)} \\
&(C^{1*}_i,C^{2*}_i)=\argmax_{C,C' \in \mathcal{C}, C\cap C'=\emptyset }  \left( T\!S\!I_i^{CC'}\right) & ( T\!S\!I_i^{CC'} \text{based iRSA)} 
\end{align*}

\subsection{Generic optimization algorithms}
The previous criteria can be optimized analytically only for simple models (an example is presented in Section \ref{sec_res_1d}). In a general setting, numerical approaches must be considered. For such numerical algorithms, the output space $\mathcal{C}$ is the set of all simulated points. Regions of $\mathcal{C}$ can thus  be seen as clusters of simulated points and iRSA as a clustering algorithm that uses sensitivity-based criteria.

All proposed algorithms require that a classical sensitivity workflow compatible with scalar outputs can be implemented on the model of interest and, more precisely, that a design of experiment $\boldsymbol{X}$ is available for the sensitivity analysis. We note $\boldsymbol{Y}$ the matrix of multivariate outputs obtained when applying the model on the design matrix $\boldsymbol{X}$. We suppose that applying the model on $\boldsymbol{X}$ allows to compute Sobol' type sensitivity indices on scalar outputs (for example, $\boldsymbol{X}$ is defined using a Jansen pick and freeze method that allows the computation of Sobol' indices using the Jansen estimation formula \cite{jansen1999analysis}).

In the following, we propose a set of algorithms implementing iRSA. Their shared principle is to perform a pre-clustering of  $\mathcal{C}$ into $K_Y$ clusters noted $(C_1,...,C_{K_Y})$ in order to reduce the computational cost. 
This pre-clustering is performed using a classical distance-based approach (denoted as \textit{$ClustFun^Y$} in the different algorithms), typically a K-Means algorithm or a hierarchical clustering method, but other methods can be considered to better suit a given application.

\subsubsection{Algorithm for indices based on single cluster membership}
We present a generic algorithm working for any region-based sensitivity criterion $Crit$ provided that it is defined using single membership functions. The most simple criteria are $S\!I_i^{C}$ or $T\!S\!I_i^{C}$ as presented in the previous sections. 

The algorithm has several parameters: the index $i$ of the input under study, the design matrix $\boldsymbol{X}$ and the associated output matrix $\boldsymbol{Y}$, the criterion on single membership function $Crit$, the pre-clustering method $ClustFun^Y$, the number $K_Y$ of clusters for the pre-clustering step and a size parameter $\gamma$ that can be used to avoid partition made of too small regions.
The optimization of $Crit$ is performed by enumerating all partitions of $(C_1,..,C_{K_Y})$ into two non-empty sets. The total number of such  2-partitions is the Stirling number  $\mathscr{S}_2^{K_Y}=2^{{K_Y}-1}-1$ which drives the complexity of the algorithm. The algorithm returns the best partition under the size constraint as well as the associated sensitivity score.
We denote as iRSA\_SM (SM for for Single Membership) this algorithm:

\begin{algorithm}
\caption{iRSA\_SM$(i,\boldsymbol{X},\boldsymbol{Y},Crit,ClustFun^Y,K_Y,\gamma)$ }
\label{algdSI}
\begin{algorithmic}
\STATE{Apply $ClustFun^Y(K_Y)$ on $\boldsymbol{Y}$ to get $K_Y$ clusters $(C_1,..,C_{K_Y})$}
\FOR{all 2-partitions $(P_k, \overline{P_k})_{k \in [1:\mathscr{S}_2^{K_Y}]}$ of $(C_1,..,,C_{K_Y})$}
\STATE{Compute $\gamma_k=\min\left(Card(P_k),Card(\overline{P_k})\right)$}
\STATE{Compute $Crit(i,\boldsymbol{X},\boldsymbol{Y},P_k)$}
\ENDFOR
\STATE{Get $k^*=\argmax
_{k\in [1:\mathscr{S}_2^{K_Y}], \gamma_k \geq \gamma} Crit(i,\boldsymbol{X},\boldsymbol{Y},P_k)$ }
\RETURN    $(P_{k^*},\overline{P_{k^*}})$ and  $Crit(i,\boldsymbol{X},\boldsymbol{Y},P_{k^*})$
\end{algorithmic}

\end{algorithm}

Note that by storing all sensitivity indices for each partition in the loop, it is possible to process all inputs at once during the post-processing step, thus avoiding to run the loop again for different inputs.





\subsubsection{Algorithm for indices based on cluster membership differences}
In the same spirit as the previous algorithm, the optimization of a sensitivity criterion based on membership function differences such as $S\!I_i^{CC'}$ or $T\!S\!I_i^{CC'}$ (and denoted generically as \textit{Crit} in the detail of the algorithm), is performed by enumerating all partitions. Due to the structure of these indices, it is necessary to enumerate all partitions of $(C_1,..,C_{K_Y})$ into three non-empty sets. The total number of such  3-partitions is the Stirling number $\mathscr{S}_3^{K_Y}=\frac{1}{6}\left( 3^{K_Y} - 3\cdot 2^{K_Y} +3  \right)$.
We denote as iRSA\_DM (DM for Difference of Membership) this algorithm.

\begin{algorithm}
\caption{iRSA\_DM$(i,\boldsymbol{X},\boldsymbol{Y},Crit,ClustFun^Y,K_Y,\gamma)$ }
\label{algdSI}
\begin{algorithmic}
\STATE{Apply $ClustFun^Y(K_Y)$ on $\boldsymbol{Y}$ to get $K_Y$ clusters $(C_1,..,C_{K_Y})$}
\FOR{all 3-partitions $\left(Q_1^k,Q_2^k,Q_3^k\right)_{k \in [1:\mathscr{S}_3^{K_Y}]}$ of $(C_1,..,,C_{K_Y})$}
\STATE{Compute $\gamma_k=\min\left(Card(Q_1^k),Card(Q_2^k),Card(Q_3^k)\right)$}
\STATE{Compute for $(p,q)\in I=\{(1,2),(1,3),(2,3)\}$,   $Crit(i,\boldsymbol{X},\boldsymbol{Y},Q_p^k ,Q_q^k)$}
\ENDFOR
\STATE{Get $(k^*,p^*,q^*)=\argmax
_{k\in [1:\mathscr{S}_3^{K_Y}],(p,q)\in I, \gamma_k \geq \gamma} Crit(i,\boldsymbol{X},\boldsymbol{Y},Q_p^k ,Q_q^k)$ }

\RETURN    $\left(Q_1^{k^*},Q_2^{k^*},Q_3^{k^*}\right)$ and  $Crit(i,\boldsymbol{X},\boldsymbol{Y},Q_{p^*}^{k*}, Q_{q*}^{k*})$
\end{algorithmic}
\end{algorithm}

\subsection{Improved algorithm for first order indices based on single cluster membership}
The previous algorithms are computationally intensive because of the exhaustive search over all partitions (2-partitions when optimizing indices of cluster membership, 3-partitions when  optimizing indices of cluster membership differences). The associated costs are driven by the loop over these sets and is exponential with respect to the number of clusters of the pre-clustering set $(O(2^ {K_Y}),O(3^{K_Y}))$. In practice, it becomes intractable to consider ${K_Y}$ larger than $15-20$, which limits the exploration to large-sized clusters. This limit motivates the proposition of an improved algorithm that would take advantage of the properties of the cost functions (which were not taken into account in the different algorithms presented in the previous section). As the study then becomes dependent on the chosen criterion, we present here an improvement of the optimization based on $S\!I_i^{C}$ (first order Sobol' index on single cluster membership).

We start by giving a property that helps building partitions from a set of elementary clusters $(C_k)$ of the output space.

\subsubsection{An aggregating property of clusters }
We consider a model input $X$  and suppose that it has a uniform distribution over an interval $I$. Let $ (C_k)_{k \in [1,K]}$ be a set of non overlapping clusters of the output space. 
We consider the distribution of $X$ given $Y \in C_k$ and denote as $h^{C_k}$ the associated histograms obtained when discretizing $I$ into $n_x$ bins. Let $N$ be the total number of simulated points. Then the discrete approximation of criterion ${S\!I}^{C_k}$, noted $\widetilde{S\!I}^{C_k}$, can be written using  $h^{C_k}$:    
$$\widetilde{S\!I}^{C_k} = {n_x \over \displaystyle \sum_{j=1}^{n_x} h_j^{C_k} (N- \sum_{j=1}^{n_x} h_j^{C_k})} \sum_{i=1}^{n_x} (h_i^{C_k} - {1 \over n_x} \displaystyle \sum_{j=1}^{n_x} h_j^{C_k})^2$$

\begin{proposition} Let’s consider two  elementary clusters $C_k$ and $C_{k’}$ with perfectly correlated histograms $h^{C_k}$ and $h^{C_k'}=\theta h^{C_k}$.
Let’s denote $(C^*,\overline{C^*})$ the partition of the output space built from the $(C_k)$ that maximizes $\widetilde{S\!I}^{C}$ (discretized first order index based on single cluster membership). Then  $C_k$ and $C_{k’}$ belong both to $C^*$ or to $\overline{C^*}$.
\label{prop_hist}
\end{proposition}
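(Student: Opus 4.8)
The plan is to reformulate $\widetilde{S\!I}^{C}$ as a function of a single scalar interpolation parameter and to exploit a quasiconvexity property of that function. First I would record that aggregating clusters amounts to summing their histograms: if a region $C$ is the union of some of the $(C_k)$, then $h^{C}=\sum_k h^{C_k}$ over the clusters it contains. Writing $Q(v)=\sum_i (v_i-|v|/n_x)^2$ with $|v|=\sum_i v_i$, one checks that $Q(v)=\|Pv\|^2$ is the squared Euclidean norm of the centered vector $Pv$, where $P=I-\tfrac{1}{n_x}\mathbf 1\mathbf 1^{\top}$ is the orthogonal projector onto $\mathbf 1^{\perp}$, so that $\widetilde{S\!I}^{C}=n_x\,Q(h^{C})/\big(|h^{C}|\,(N-|h^{C}|)\big)$.

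Next I would set up the comparison. Fix the maximizing partition and let $a$ and $b$ be the histograms of the clusters other than $C_k,C_{k'}$ lying on the $C^*$ side and on the $\overline{C^*}$ side, respectively. Because $h^{C_{k'}}=\theta h^{C_k}$, both $C_k$ and $C_{k'}$ contribute to a region along the \emph{single} direction $w:=P h^{C_k}$. This lets me introduce a scalar $\lambda\in[0,1+\theta]$ measuring the mass of the common shape placed on the $C^*$ side and define
$$g(\lambda)=\frac{n_x\,\|\lambda w + Pa\|^2}{(\lambda\alpha+p)\big((1+\theta-\lambda)\alpha+q\big)},$$
with $\alpha=|h^{C_k}|$, $p=|a|$, $q=|b|$. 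The four admissible placements of $C_k,C_{k'}$ correspond to $\lambda=0$ and $\lambda=1+\theta$ (the two configurations keeping them together) and to $\lambda=1$ and $\lambda=\theta$ (the two configurations separating them), and $g$ evaluated at these points returns the corresponding value of $\widetilde{S\!I}$.

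The crux is then to show that $g$ attains its maximum over $[0,1+\theta]$ at an endpoint. The numerator is a nonnegative convex quadratic in $\lambda$ (leading coefficient $\|w\|^2\ge 0$), while the denominator is a strictly concave quadratic (leading coefficient $-\alpha^2<0$) that remains positive on the interval for any genuine partition. A ratio of a nonnegative convex function to a positive concave function is quasiconvex: its sublevel sets $\{g\le c\}$ are either empty (for $c<0$, since $g\ge 0$) or of the form $\{\,\text{numerator}-c\cdot\text{denominator}\le 0\}$ with a convex left-hand side (for $c\ge 0$). Hence every interior value is bounded by $\max\big(g(0),g(1+\theta)\big)$, so in particular $g(1)$ and $g(\theta)$ cannot exceed the better of the two together-configurations, which yields the claim.

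I expect the main obstacle to be recognizing and justifying the quasiconvex structure rather than the algebra: the whole argument hinges on proportionality, which is exactly what forces both clusters to act along the same direction $w$ and collapses the four candidate partitions onto a single one-parameter curve $\lambda\mapsto \lambda w+Pa$; without it the numerator would vary along two independent directions and no scalar interpolation would capture the four configurations. I would also treat the degenerate cases: when $w=0$ (a flat conditional histogram, i.e. $\widetilde{S\!I}^{C_k}=0$) the numerator is constant and $g=\text{const}/(\text{concave})$ is convex, so the endpoint conclusion persists; and to upgrade ``never strictly better'' into ``the maximizer keeps them together'' one invokes strict quasiconvexity, valid whenever $w\neq 0$, ties being non-generic.

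\begin{proof}
\end{proof}
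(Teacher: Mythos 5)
Your proposal is correct, and it takes a genuinely different route from the paper's own proof. The paper argues by an exchange lemma: setting $C_1=C_0$ augmented by the points with histogram $h$, and $C_2=C_1$ further augmented by the points with histogram $\theta h$, it shows by direct algebraic manipulation of the ratio $\widetilde{S\!I}^{C_2}/\widetilde{S\!I}^{C_1}$ that whenever adding $h$ weakly improves the criterion ($\widetilde{S\!I}^{C_0}\leq \widetilde{S\!I}^{C_1}$, which holds at an optimal partition by optimality), adding the proportional set strictly improves it, with an explicit quantitative margin $\widetilde{S\!I}^{C_2}/\widetilde{S\!I}^{C_1}\geq 1+\theta(1+\theta)H_1^2/\bigl((H_0+(1+\theta)H_1)(N_0-(H_0+(1+\theta)H_1))\bigr)>1$; a separating maximizer would contradict this. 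You instead collapse all four admissible placements of $C_k,C_{k'}$ onto the one-parameter curve $\lambda\mapsto g(\lambda)$ (with $\lambda\in\{0,\theta,1,1+\theta\}$ the mass of the common shape on one side) and invoke the standard fact that a nonnegative convex quadratic divided by a positive concave quadratic is quasiconvex, so the maximum over $[0,1+\theta]$ is attained at an endpoint, i.e.\ at a together-configuration; your identification of the four placements with these $\lambda$-values and of $g$ with $\widetilde{S\!I}$ is exact, since $N-|h^C|=q+(1+\theta-\lambda)\alpha$. What your route buys: it handles the two separated placements $\lambda=1$ and $\lambda=\theta$ symmetrically in a single step, requires no optimality-derived improvement hypothesis, and makes conceptually transparent why proportionality is the operative assumption (it is exactly what reduces the numerator to one direction $w$). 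What the paper's route buys: strictness comes for free from the explicit lower bound, whereas you must argue it separately, and there your write-up is slightly loose — appealing to ``ties being non-generic'' is weaker than what is available, and your sufficient condition $w\neq 0$ is neither needed nor the natural one. The clean fix: with $c=\max\bigl(g(0),g(1+\theta)\bigr)$, the quadratic $\phi(\lambda)=\|\lambda w+Pa\|^2-\tfrac{c}{n_x}(\lambda\alpha+p)\bigl((1+\theta-\lambda)\alpha+q\bigr)$ has leading coefficient $\|w\|^2+\tfrac{c}{n_x}\alpha^2$, strictly positive whenever $c>0$ (since $\alpha>0$ for a nonempty cluster), so $\phi$ is strictly convex and $g(1),g(\theta)<c$ whenever the optimal value is positive — covering the case $w=0$ as well. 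With that one-line strengthening your argument fully establishes the proposition; the residual degeneracies (an empty side of the partition, or a criterion identically zero) are equally implicit in the paper's proof, which needs $\widetilde{S\!I}^{C_1}>0$ and a nonempty $C_0$.
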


\begin{proof}
See Appendix B.
\end{proof}
This property states that elementary clusters with perfectly correlated histograms can not be in different sets of the optimal partition. 
\subsubsection{Algorithm}
We derived an efficient algorithm named iRSA\_SM' by taking advantage of Proposition  \ref{prop_hist}. The principle of iRSA\_SM' is to keep the overall structure of Algorithm iRSA\_SM and to improve the pre-processing step by using Proposition  \ref{prop_hist}. More precisely, Proposition  \ref{prop_hist} is used to propose a merging  of elementary clusters based on the correlation of their histograms. Indeed, if the histograms are perfectly correlated, then from  Proposition  \ref{prop_hist}, the associated elementary clusters can be merged.

The pre-clustering step of Algorithm iRSA\_SM' consists in two steps. Firstly, as  for iRSA\_SM, a  pre-clustering of $\boldsymbol{Y}$ using \textit{$ClustFun^Y$} is performed to get $K_Y$ clusters, but this time with a larger $K_Y$ (500-1000). Secondly\textcolor{green}{,} a clustering of these $K_Y$ clusters into $K_H$ meta-clusters is done using a distance-based method denoted \textit{$ClustFun^H$}. \textit{$ClustFun^H$} can be the same as \textit{$ClustFun^Y$} but is based on a different distance: \textit{$ClustFun^H$} works with an histogram-based correlation distance (e.g. $d(h,h')=1-Cor(h,h')$) and is applied on the elementary histograms associated to the conditional distributions of $X_i$ given $Y \in C_k$. The last step of the algorithm, the exhaustive search for the best partition over all possible merging of elementary clusters, is kept unchanged.

Compared to iRSA\_SM, Algorithm iRSA\_SM' has three additional parameters: the number $n_X$ of bins for the histograms computations, the generic method \textit{$ClustFun^H$} and the number of meta-clusters $K_H$.

\begin{algorithm}
\caption{iRSA\_SM'$(i,\boldsymbol{X},\boldsymbol{Y},Crit,ClustFun^Y,K_Y,n_x,ClustFun^H,K_H,\gamma)$ }
\label{algdSI}
\begin{algorithmic}
\STATE{Apply $ClustFun^Y(K_Y)$ on $\boldsymbol{Y}$ to get $K_Y$ clusters $(C_1,..,C_{K_Y})$, with $K_Y$ large }
\STATE{Compute all elementary histograms $(h^{C_k})_{k=1..K_Y}$ for input $X_i$ discretized with $n_x$ bins}
\STATE{Apply $ClustFun^H(K_H)$ on $(h^{C_1},..,h^{C_{K_Y}})$ to get  $K_H$ meta-clusters $(\hat{C}_1,..,\hat{C}_{K_H})$ based on histogram correlation}
\FOR{all 2-partitions $(P_k, \overline{P_k})_{k \in [1:\mathscr{S}_2^{K_H}]}$ of $(\hat{C}_1,..,\hat{C}_{K_H})$}
\STATE{Compute $\gamma_k=\min\left(Card(P_k),Card(\overline{P_k})\right)$}
\STATE{Compute $Crit(i,\boldsymbol{X},\boldsymbol{Y},P^k)$}
\ENDFOR
\STATE{Get $k^*=\argmax
_{k\in [1:\mathscr{S}_2^{K_H}], \gamma_k \geq \gamma} Crit(i,\boldsymbol{X},\boldsymbol{Y},P^k)$ }
\RETURN $(P_{k^*},\overline{P_{k^*}})$ and  $Crit(i,\boldsymbol{X},\boldsymbol{Y},P^{k^*})$
\end{algorithmic}

\end{algorithm}

Note that this time, the algorithm is strongly dedicated to the optimization related to a single input $X_i$ due to the histogram computation step . It is thus only possible to re-use the first pre-clustering when performing the optimization for another input.

\section{Application of the method on various models}
\label{sec_results}

\subsection{Test model with 1d outputs: analytical resolution}
In this section, we present the analytical resolution of the optimization problem on a simple 1D model in order to highlight the main issues of the iRSA   approach.
\label{sec_res_1d}
\subsubsection{Model and input distributions}
We consider the model with two input factors $X_1$ and $X_2$ defined by : $$Y=Sign(X_1) \cdot |X_2|$$
The model is studied over the domain defined by $X_1$ and $X_2$ having independent uniform distributions on $[-1,1]$. $Y$ thus also takes its values within $[-1,1]$.

\subsubsection{Optimization problem}
 We consider the problem of finding for $X_1$ (resp. $X_2$) the partition $(C,\bar{C})$ of the one-dimensional output space $[-1,1]$ the most influenced by the variations of $X_1$ (resp. $X_2$).
$$
C_i^*=\argmax_{C\subset [-1,1]} SI_i^C, \   i=1,2
$$

We limit the search to partitions that can be parameterized with a single cutting value $y_c$ or with two cutting values $y_{c_1}$ and $y_{c_2}$. In the first case, it amounts to consider partitions of type 'A-B', and in the second case to partitions of type 'A-B-A' of the 1-dimensional output space.

\subsubsection{Analytical expressions of the region-based indices}
The simplicity of this model allows to derive analytical expressions of the sensitivity indices $(SI_i^C)_{i=1,2}$ for the different types of partition using one  cutting values $y_c$ or two cutting values $ y_{c1}$ and $y_{c2}$ (see Appendix A for details on how to obtain these expressions).

\begin{table}
\begin{tabular}{|l|c|c|}
     \hline
     & $S\!I_i^C(y_c)$ &$S\!I_i^C(y_{c_1},y_{c_2})$  \\
     \hline
    $X_1$ & $ {(1-|y_c|)^2 \over 1- y_c^2}$ & \multicolumn{1}{|l|}{ ${(|y_{c_2}|-|y_{c_1}|)^2 \over (y_{c_2}-y_{c_1}) (2-y_{c_2}+y_{c_1})} $ }\\
    $X_2$ & ${|y_c| \over 1+|y_c|}$ & $\begin{cases}
 {1-y_{c_2}+y_{c1} \over 2-y_{c_2}+y_{c_1} }  & \mbox{ if } y_{c_2} \leq 0 \mbox{ or } y_{c_1} \geq 0 \\
 {1-y_{c_2}+y_{c_1} \over 2-y_{c_2}+y_{c_1} } + 2 {\min(|y_{c_1}|,y_{c_2})\over (y_{c_2}-y_{c_1})(2-y_{c_2}+y_{c_1})} & \mbox{ if } y_{c_1} \leq 0 \leq y_{c_2} 
\end{cases}$\\
\hline
\end{tabular}
\caption{Analytical expressions of the region-based sensitivity indices for model  $Y=Sign(X_1) \cdot |X_2|$   \label{tab_ex1d}}
\end{table}

\subsubsection{Optimization}
 The expressions presented in Table \ref{tab_ex1d} allow to derive the optimal cutting values that maximize the sensitivity indices for the two parameters in the different cases.
\paragraph{Single cutting value $y_c$}
In this case, as can be seen in Figure \ref{fig_S1_model1d}, it is easy to show that:
\begin{itemize}
\item $S\!I_1^C$ is maximum for $y_c=0$. For this partition, we have $S\!I_1^C=1$ and $S\!I_2^C=0$.
\item $S\!I_2^C$ is maximum when $y_c$ reach the boundaries of the domain ($y_c=-1$ or $y_c=1$). In this case,  $S\!I_2^C ={1 \over 2}$. For these partitions, we have $S\!I_1^C=0$.
\end{itemize}
We conclude that the single cutting value case allows to find an optimal partition $(C_1^*,\overline{C_1^*})=([-1,0],[0,1])$ for input $X_1$ independently of the hypothesis on the number of cutting values (as  $S\!I_1^C$ maximum value is $1$). However, the case of a single cutting value does not allow to find a partition fully explained by the variations of $X_2$.

\begin{figure}[!ht]
 \begin{center}
 \includegraphics[width=1.0 \textwidth]{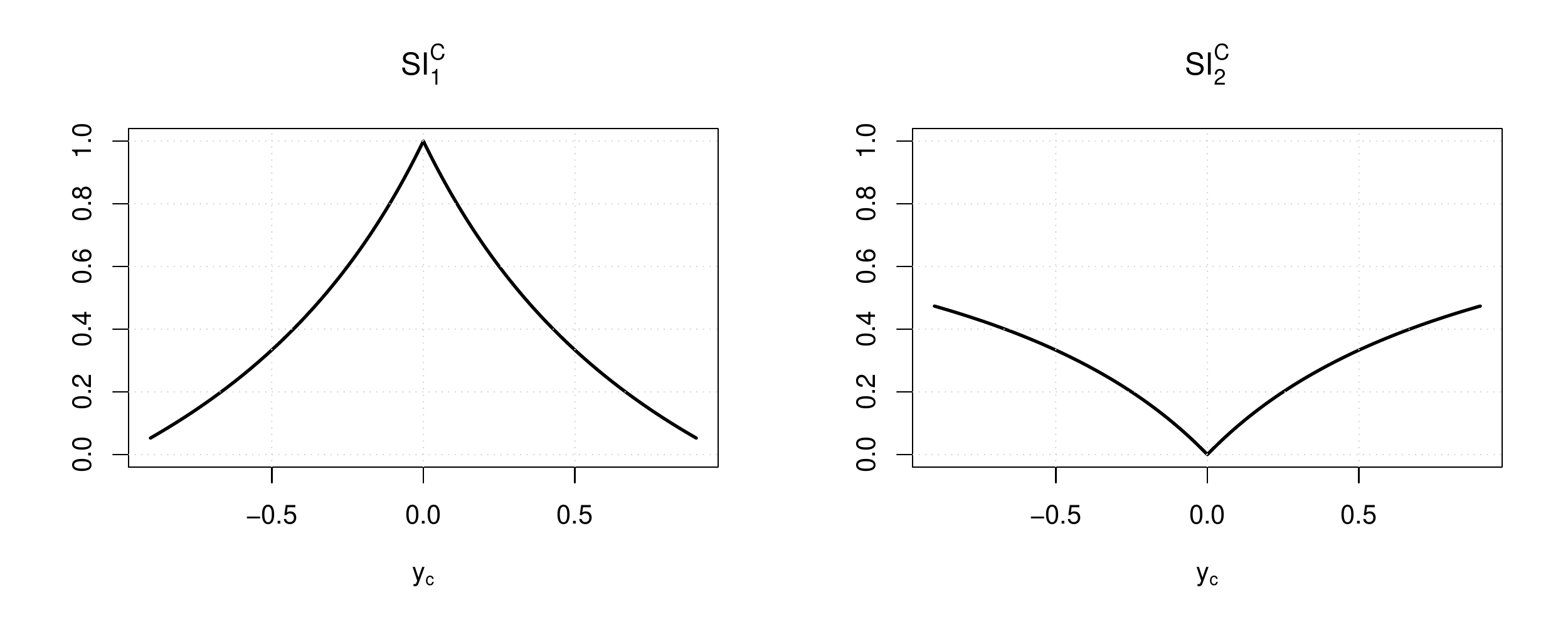}
 \end{center}
 \caption{First order indices $S\!I_i^C$ as a function of a single cutting value $y_c$ for the two inputs of model $Y=Sign(X_1) \cdot |X_2|$
 \label{fig_S1_model1d}}
 \end{figure} 
 
\paragraph{Two cutting values}
In this case, as can be seen in Figure \ref{fig_S2_model1d}, we can show that:
\begin{itemize}
\item $S\!I_1^C$ is maximum for $\displaystyle (y_{c_1},y_{c_2})=(-1,0)$ and $(y_{c_1},y_{c_2})=(0,1)$. This values are degenerated situations and correspond to the single cutting value case.
\item $S\!I_2^C$ is maximum when $y_{c_1}=-y_{c_2}$. For these partitions, we have $S\!I_2^C=1$ and $S\!I_2^C=0$.
\end{itemize}
 Considering the case of two cutting values allows to find a set of partitions with $S\!I_2^C=1$. These partitions can be written  $(C_2^*,\overline{C_2^*})=\left([-\delta,\delta],[-1,-\delta]\cup [\delta,1]\right)$, with $\delta \in ]0,1[$.

\begin{figure}[!ht]
 \begin{center}
 \includegraphics[width=1.0 \textwidth]{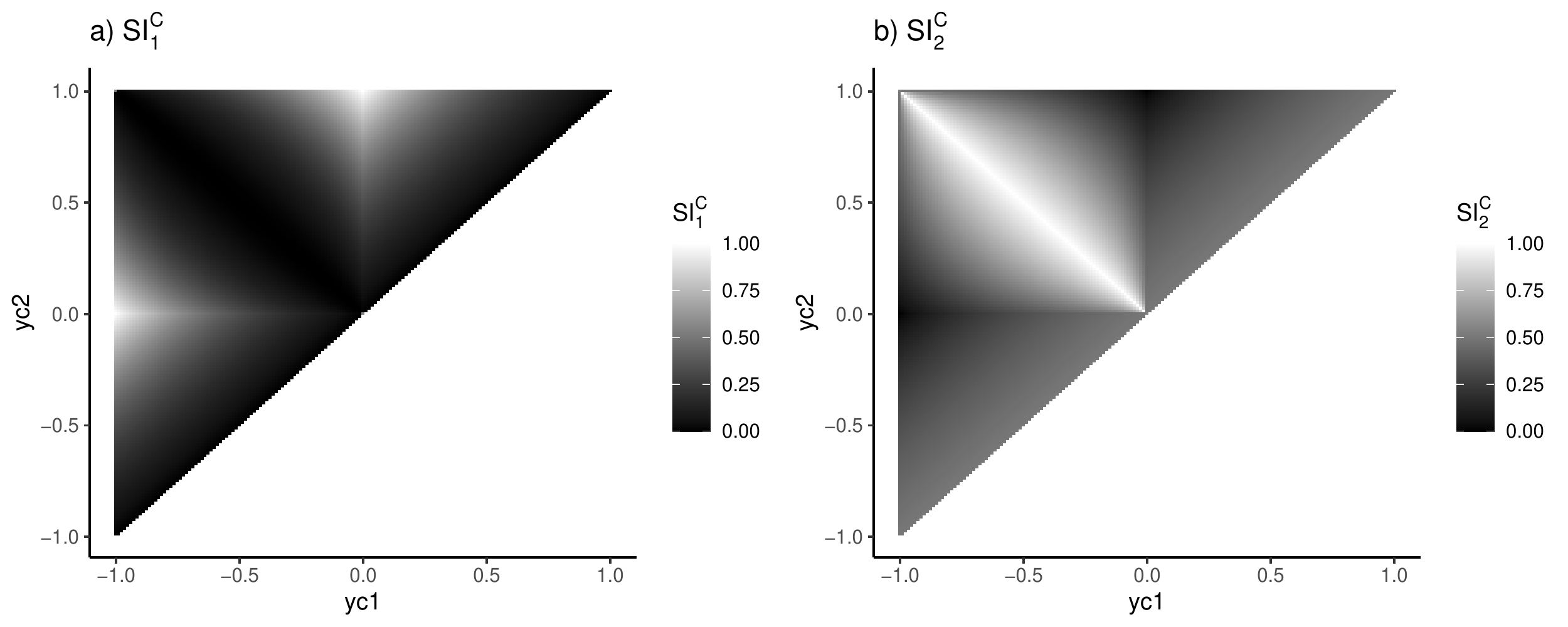}
 \end{center}
 \caption{First order indices $S\!I_i^C$ as a function of two cutting values $y_{c_1}$ and $y_{c_2}$ for the two inputs of model  $Y=Sign(X_1) \cdot |X_2|$
 \label{fig_S2_model1d}}
 \end{figure}

\subsubsection{Conclusion of the example}

Using the analytical expressions of the sensitivity indices given previously, the iRSA sensitivity approach allows to conclude that $X_1$ has the effect of leading negative output to positive outputs, and that $X_2$ has the effect of leading the output from the center of the domain toward its boundaries. 

This example also  highlights some issues about iRSA:
\begin{itemize}
\item The optimal partition is associated to a model input and may be different depending on the input factor considered.
\item Optimal partitions can be found even if the space of partitions is not completely explored. In this example, exploring partitions defined by one or two cutting values was sufficient to find global optima.
\item We can have $S\!I_i^{C^*}>S\!I_i$ ($S\!I_i$ being the standard first order Sobol' index of input factor $X_i$). In this example, we have $S\!I_2^{C^*}=1$ while $S\!I_2=0$: the information provided by iRSA is different from the one provided by  a standard variance-based sensitivity analysis.
\item The optimal partition may not be unique. In this example, we found a set of optimal partitions for $X_2$.

\end{itemize}
\subsection{Test model with 2D outputs: illustration of the different SI-based criteria}
\label{sec_res_2d}

\subsubsection{Model and input distributions}
We consider the model having 2D outputs $(Y_1,Y_2)$ and four inputs $(X_1,X_2,X_3,X_4)$. Basically, the model draws a point in the plane at a distance related to $X_4$ and with an angle related to $X_3$. This angle is defined from one of three given centers, with a choice of center related to $X_1$ and $X_2$ in interaction. More precisely, the model definition is the following:

$$
\begin{array}{l}
Y_1 = c_1 + 0.4 \cdot X_4^3 \cdot \cos( 2 \pi X_3)  \\
 Y_2  = c_2 + 0.4 \cdot X_4^3 \cdot \sin( 2 \pi X_3) 
\end{array}
\text{where } (c_1,c_2)= \begin{cases}
    (0.5,0.25), & \text{if } X_1<0.5   \\
    (0.25,0.75), & \text{if } X_1 \geq 0.5  \text{ and } X_2 < 0.5  \\
    (0.75,0.75), & \text{if } X_1 \geq 0.5  \text{ and } X_2 \geq 0.5 
    \end{cases} 
$$
All inputs have independent uniform distribution in $[0,1]$.
The sensitivity indices were computed using the SobolSalt function of the R package sensitivity \cite{Pujol2017} with a sample size of $n=1000$, producing $10000$ output values.

\subsubsection{Optimization setting}
 We illustrate the three numerical algorithms of Section \ref{sec_method} on this 2D toy model. More precisely, we apply for each input $(X_i)_{i=1..4}$ Algorithm iRSA\_SM and its improved version iRSA\_SM' in order to find the clustering maximizing $S\!I_i^C$. We also apply Algorithm iRSA\_DM in order to find the clustering maximizing $S\!I_i^{CC'}$.
 For algorithms iRSA\_SM and iRSA\_DM, we use a pre-clustering of the simulated results in $K_Y=10$ elementary clusters using a fuzzy k-means clustering approach (which is well adapted for small $K_Y$). This pre-clustering is presented in Figure \ref{fig_K10}. For Algorithm  iRSA\_SM', the first pre-clustering was performed in $K_Y=1000$ elementary clusters. The histogram correlation method was then applied to obtain $K_H=10$ clusters before the application of the exhaustive search step with a size constraint parameter $\gamma=10\%$. A hierarchical clustering approach (more adapted for large $K_Y$) was used for $ClustFun^Y$ (with an euclidean distance on Y) and $ClustFun^H$ (with a correlation distance on histograms). 


\begin{figure}[!ht]
 \begin{center}
\fbox{\includegraphics[width=0.4 \textwidth]{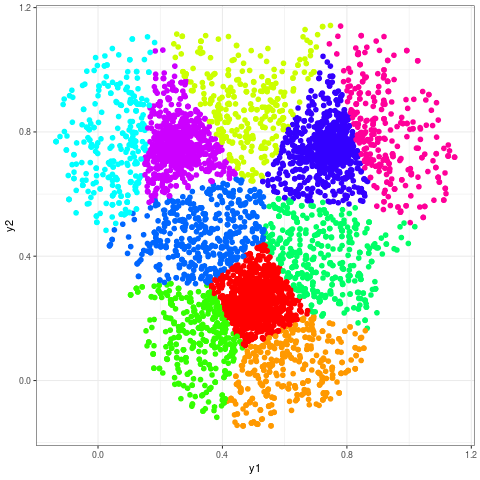}}
 \end{center}
\caption{Clustering of the 2D test model outputs into $K_Y=10$ clusters 
 \label{fig_K10}}
 \end{figure}
 
\subsubsection{Method for displaying clusters}
\label{sec_visu2d}
 
For Algorithm iRSA\_SM and iRSA\_DM, we present the clustering into two or three clusters and the associated region-based sensitivity indices. Algorithm iRSA\_SM' must be handled differently, as the algorithm does not guaranty clear boundaries between clusters. To handle this issue, we propose a different display of its results: we associate to each pixel a continuous color level (noted $mmv$, for mean membership value) linked to the proportion of output points belonging to each of the two clusters. Using this definition, pixels with output points from cluster A only (resp. B only) will be displayed in blue (resp. red), but pixels with output points from the two clusters will have a clearly different color. This display allows to quickly spot zones where the clusters boundaries are blurred and where interpreting the regions can be difficult.

  \subsubsection{Clustering based on $S\!I^C$ with Algorithm iRSA\_SM}
 \begin{figure}[!ht]
 \begin{center}
  \fbox{\includegraphics[width=1 \textwidth]{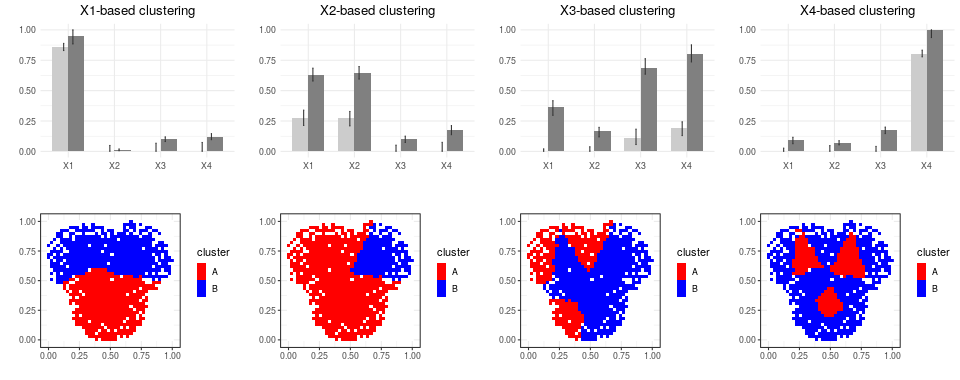}}
 \end{center}
 \caption{Results obtained with the $S\!I^C$ criterion and Algorithm iRSA\_SM  on the 2D toy model. First line: region-based sensitivity indices of the best clustering for each input. Second line: visualization of the clusters for each input.
 \label{fig_si}}
 \end{figure} 
 We present in Figure \ref{fig_si} the result obtained with Algorithm iRSA\_SM. For inputs $X_1$ and $X_4$, the clustering found is associated to very high region-based indices ($S\!I^{C^*}_1=0.86$, $S\!I^{C^*}_4=0.80$). These scores reveal a strong effect of these inputs in the occurrence of the obtained clusters. For $X_1$, two large clusters are obtained with a boundary approximately equal to the line $y_2=0.5$. For $X_4$, the clustering distinguishes a region made of three zones (each one close to one of the three centers) from a region made of points far from all centers. Note that the pre-clustering does not allow to get perfect circles around the centers. Nevertheless, the optimized region-based sensitivity indices are very high. For input $X_3$, the sensitivity score associated to the clustering is low and prevents any further interpretation. For $X_2$, we obtain a relatively low sensitivity score ($S\!I^{C^*}_2=0.27$) but a clear structure of clusters: the optimal partition is made from points closest to top right center. This was expected, as $X_2$ drives the choice of the two centers of the top. But as this choice is made conditionally to $X_1$, there is a strong interaction effect with $X_1$ that prevents from getting first order indices close to one.
\subsubsection{{Clustering based on $S\!I^{CC'}$ with Algorithm iRSA\_DM}}
The optimization of criterion $S\!I^{CC'}$ tries to find two regions whose transition from the one to the other is the most influenced by the variations of an input. By construction it leads to higher scores compared to optimizing $S\!I^{C}$ since $S\!I^{C}$ is a particular case of  $S\!I^{CC'}$ with $C'=\emptyset$.
When looking at Figure \ref{fig_dclustSI}, we can verify this property for $X_1$ and $X_4$. We can also see that the result obtained on $X_3$ still prevents any interpretation. For $X_4$, the indices are still high, but the interpretation is not made easier. On the other hand for $X_2$, the algorithm found an improved value of the first order index ( $S\!I^{CC'^*}_2=0.41$ ) associated with a value of its total index  $T\!S\!I^{CC'^*}=0.97$  that only $X_2$ reaches. This  means that $X_2$ is the key model input that explains the transitions between the red cluster (points closest to the top right center) and the blue one (points closest to the top right center). Note again that it is not surprising to still get a strong interaction with $X_1$ given the model definition. This example shows the interest of considering the $S\!I^{CC'}$ criterion.
 \begin{figure}[!ht]
 \begin{center}
 \fbox{\includegraphics[width=1 \textwidth]{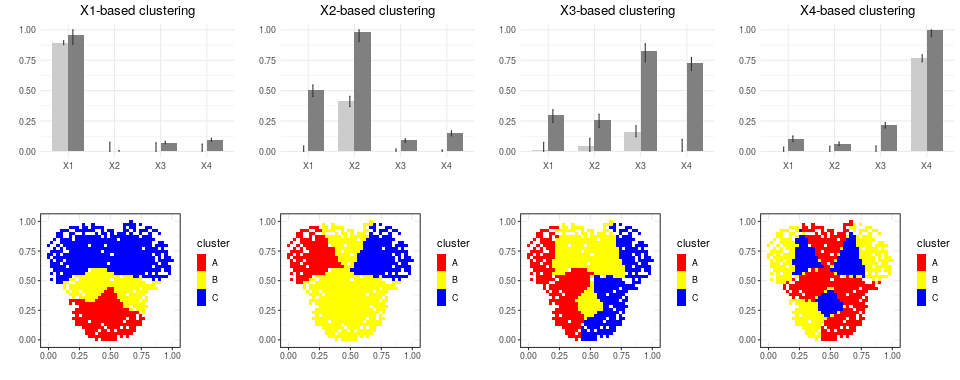}}
 \end{center}
 \caption{Results obtained with the $S\!I^{CC'}$ criterion and Algorithm iRSA\_DM on the 2D toy model. First line: region-based sensitivity indices of the best clustering for each input. Second line: visualization of the clusters  (red and blue with neutral class in yellow) for each input.
 \label{fig_dclustSI}}
 \end{figure} 


\subsubsection{Clustering based on $S\!I^C$ with Algorithm iRSA\_SM'}
The results of the application of Algorithm iRSA\_SM' are presented in Figure \ref{fig_si_imp}. The algorithm allows to generate clusters with more flexibility in terms of boundaries and  spatial resolution than Algorithm iRSA\_M. For $X_1$ and $X_4$, the sensitivity indices were already well optimized using the first algorithm. However, this score is significantly improved for $X_4$ using Algorithm iRSA\_SM'. We can also note that the three red zones identified for $X_4$ have the clear expected circular shape, which was not possible to obtain with the first algorithm. For $X_2$, the scores are still low ($S\!I^{C^*}_3=0.30$) but in this case, it is due to the definition of the model, not to a limit of the algorithm. A clear improvement was obtained for $X_3$, for which the sensitivity score associated to the best clustering is strongly improved ($S\!I^{C^*}_3=0.57$ against $S\!I^{C^*}_3=0.11$ for Algorithm iRSA\_SM) and where the expected  angular pattern starts to appear. The three red zones correspond approximately to the same angular section around their closest centers, which was expected given the model definition.
\begin{figure}[!ht]
 \begin{center}
 \fbox{\includegraphics[width=1 \textwidth]{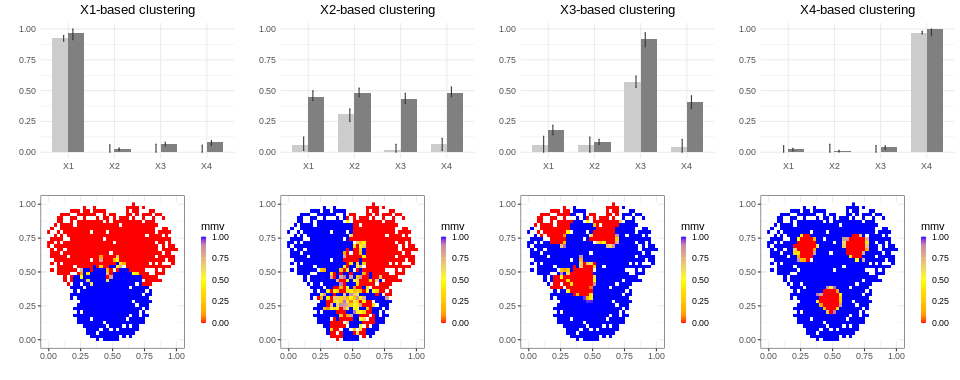}}
 \end{center}
 \caption{Results obtained with the $S\!I^C$ criterion and Algorithm iRSA\_SM'  on the 2D toy model. First line: region-based sensitivity indices of the best clustering for each input. Second line: visualization of the clusters for each input using mean membership maps.
 \label{fig_si_imp}}
 \end{figure} 

To conclude, we have shown on this example that Algorithm iRSA\_SM' managed to automatically identify regions of the output space that maximize first-order region-based Sobol' indices. These regions were quite precisely delimited and are consistent with what was expected, given the model definition.

\subsection{Test model producing time series: application of Algorithm iRSA\_SM'}
 \label{sec_res_nd}
 
\subsubsection{Model and input distributions}
We apply the iRSA approach on the environmental model CANTIS \cite{garnier2001}, which simulates the decomposition of organic biomass in crop residues over time. We use the same model setting as in \cite{roux2021cluster} and invite the reader to refer to this article to get more details. The model setting consists in ten uncertain model inputs and the analysis of one time-varying output: the zymogenous microbial biomass, here-after referred to as $\ZYB$. The sensitivity indices are  computed using the SobolSalt function of the R package sensitivity \cite{Pujol2017} with a sample size $n=2500$ producing $55000$ output curves of $\ZYB$.
\subsubsection{Optimization setting}
Algorithm iRSA\_M' is applied for each input $(X_i)_{i=1..10}$ in order to find the clustering maximizing $S\!I_i^C$. A hierarchical clustering approach (well adapted to large $K_Y$) is used for $ClustFun^Y$ (with an euclidean distance on Y) and $ClustFun^H$ (with a correlation distance on histograms). The first pre-clustering is performed with $K_Y=500$ clusters and the second with $K_H=10$ clusters before the application of the exhaustive step with a size constraint of $\gamma=10\%$. 

\subsubsection{Method for displaying clusters}
In this case, iRSA produces two sets of curves for each input factor. As in the 2D example, we represent the result using mean membership maps (see Section \ref{sec_visu2d}). We also plot for each cluster the quantiles of the distributions obtained at each time step. In order to quickly assess whether at a given time, the distribution of values of the two clusters differ a lot or not, we add an horizontal colorbar corresponding to the Kolmogorov–Smirnov test statistic (maximum value of the difference between cumulated distributions).
\label{sec_visu_nd}
\subsubsection{Results}
We present in Table \ref{tabl_sixi} the sensitivity indices $S\!I_1^{C^*}$ obtained when applying Algorithm iRSA\_SM' for each input of the CANTIS model in the considered uncertainty setting. Only $X_8$ and $X_9$ exhibits large values that deserve further analysis. For these two inputs, we present in Figure \ref{fig_x8x9} the clusters using the visualization approach detailed in Section  \ref{sec_visu_nd}, which is necessary to handle large-sized data set (55000 clustered curves).

For input $X_8$, we obtain $SI_8^{C*}=0.843$ for the optimized clustering. Looking at Kolmogorov–Smirnov test statistic in Figure \ref{fig_x8x9}, we see that the two clusters differ the most at the beginning of the simulation. 
More precisely, when looking at the mean membership map, we see that the red cluster is composed mostly of curves that start to increase and the blue one  of curves that start to decrease. The two clusters then overlap rapidly.
The application of the method let us conclude that $X_8$ drives the occurrence of biomass dynamics that start by increasing (or equivalently that start by decreasing).

For input $X_9$, a very high score $SI_9^{C*}=0.868$ is obtained using Algorithm iRSA\_SM'. For this input, when looking  Kolmogorov–Smirnov test statistic, it appears that what best  distinguishes the two clusters is the level of $ZY\!B$ values over the entire simulated time period, and particularly the final value. 
We can conclude that $X_9$ drives the occurrence of dynamics with highest (or lowest) biomass levels all along the considered time scale.
\begin{table}
\begin{tabular}{|c|c c c c c c c c c c|} 
 \hline
 input & $X_1$& $X_2$& $X_3$& $X_4$& $X_5$& $X_6$& $X_7$& \boldsymbol{$X_8$}& \boldsymbol{$X_9$}& $X_{10}$ \\ 
 \hline
 $SI_1^{C^*}$  & $0.056$& $0.054$& $0.057$& $0.057$& $0.042$& $0.049$& $0.038$& \boldsymbol{$0.843$}& \boldsymbol{$0.868$}& $0.031$\\
 \hline
\end{tabular}

\caption{Optimized region-based sensitivity indices obtained for each input $X_i$.\label{tabl_sixi}  }
\end{table}

\begin{figure}[!ht]
 \begin{center}
\fbox{ \includegraphics[width=1.0 \textwidth]{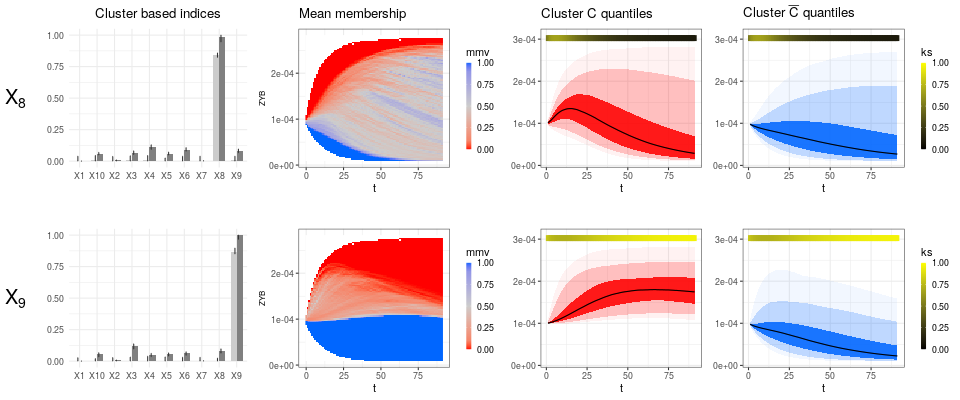}}
 \end{center}
 \caption{Clustering obtained with Algorithm iRSA\_SM' on the CANTIS model for input $X_8$ (first line)  and $X_9$ (second line). First column: region-based sensitivity indices of the best clustering; second column: mean membership maps; third and fourth columns: quantiles (0.05,0.25,0.50,0.75,0.95) of the two clusters of curves at each time. The horizontal colorbar stresses significant differences between the two distributions  $Y(t)|(Y\in C)$ and $Y(t)|(Y\in \bar{C})$. : yellow (resp. black) correspond to a high (resp. low)  value of the Kolmogorov–Smirnov (ks) test statistics applied to the two distributions.
 \label{fig_x8x9}
 }
 \end{figure}



\section{Conclusion}
In this work, we introduced a new sensitivity analysis approach (named iRSA) for exploring models with various output dimensions by proposing a new perspective on the concept of 
Regional Sensitivity Analysis. Instead of a priori defining target model behaviors, we automatically detect the ones whose occurrences are best explained by the variations of the different model inputs. This is a  new way of exploring simulation models. \newline 
The method was illustrated by analytically solving a simple scalar case. Different algorithms have been proposed  to compute numerical solutions in the case of complex models as well as adapted graphical representations to interpret the results. The relevance of the results was checked on a model with 2D outputs. iRSA was also applied on an environmental model producing time series, showing that interpretable information can be produced by the method.
\newline
Future work on iRSA will be focused on i) the extension of the proposed sensitivity criteria in order to focus on groups of inputs and interactions, ii) attempting to derive efficient algorithms for other criteria (as we did for the first order indices), iii) the application of the method on models with other kind of complex outputs, such as spatial ones.




\bibliographystyle{elsarticle-num}
\bibliography{./references}

\begin{thebibliography}{10}
\expandafter\ifx\csname url\endcsname\relax
  \def\url#1{\texttt{#1}}\fi
\expandafter\ifx\csname urlprefix\endcsname\relax\def\urlprefix{URL }\fi
\expandafter\ifx\csname href\endcsname\relax
  \def\href#1#2{#2} \def\path#1{#1}\fi

\bibitem{marrel2016sensitivity}
A.~Marrel, N.~Saint-Geours, M.~De~Lozzo, Sensitivity analysis of spatial and/or
  temporal phenomena, Handbook of Uncertainty Quantification (2016) 1--31.

\bibitem{saltelli2021sensitivity}
A.~Saltelli, A.~Jakeman, S.~Razavi, Q.~Wu, Sensitivity analysis: A discipline
  coming of age, Environmental Modelling \& Software 146 (2021) 105226.

\bibitem{campbell06}
K.~Campbell, M.~D. McKay, B.~J. Williams, Sensitivity analysis when model
  outputs are functions, Reliability Engineering \& System Safety 91~(10-11)
  (2006) 1468--1472.

\bibitem{lamboni2009multivariate}
M.~Lamboni, D.~Makowski, S.~Lehuger, B.~Gabrielle, H.~Monod, Multivariate
  global sensitivity analysis for dynamic crop models, Field Crops Research
  113~(3) (2009) 312--320.

\bibitem{xiao2018multivariate}
S.~Xiao, Z.~Lu, P.~Wang, Multivariate global sensitivity analysis for dynamic
  models based on wavelet analysis, Reliability Engineering \& System Safety
  170 (2018) 20--30.

\bibitem{lamboni2011multivariate}
M.~Lamboni, H.~Monod, D.~Makowski, Multivariate sensitivity analysis to measure
  global contribution of input factors in dynamic models, Reliability
  Engineering \& System Safety 96~(4) (2011) 450--459.

\bibitem{gamboa2014}
F.~Gamboa, A.~Janon, T.~Klein, A.~Lagnoux, Sensitivity analysis for
  multidimensional and functional outputs, Electron. J. Statist. 8~(1) (2014)
  575--603.

\bibitem{lamboni2019multivariate}
M.~Lamboni, Multivariate sensitivity analysis: Minimum variance unbiased
  estimators of the first-order and total-effect covariance matrices,
  Reliability Engineering \& System Safety 187 (2019) 67--92.

\bibitem{xu2019generalized}
L.~Xu, Z.~Lu, S.~Xiao, Generalized sensitivity indices based on vector
  projection for multivariate output, Applied Mathematical Modelling 66 (2019)
  592--610.

\bibitem{spear1980eutrophication}
R.~Spear, G.~Hornberger, Eutrophication in peel inlet—ii. identification of
  critical uncertainties via generalized sensitivity analysis, Water Research
  14~(1) (1980) 43--49.

\bibitem{roux2021cluster}
S.~Roux, S.~Buis, F.~Lafolie, M.~Lamboni, Cluster-based gsa: Global sensitivity
  analysis of models with temporal or spatial outputs using clustering,
  Environmental Modelling \& Software 140 (2021) 105046.

\bibitem{marrel2021statistical}
A.~Marrel, V.~Chabridon, Statistical developments for target and conditional
  sensitivity analysis: application on safety studies for nuclear reactor,
  Reliability Engineering \& System Safety (2021) 107711.

\bibitem{idrissi2021developments}
M.~I. Idrissi, V.~Chabridon, B.~Iooss, Developments and applications of shapley
  effects to reliability-oriented sensitivity analysis with correlated inputs,
  Environmental Modelling \& Software 143 (2021) 105115.

\bibitem{fenwick2014quantifying}
D.~Fenwick, C.~Scheidt, J.~Caers, Quantifying asymmetric parameter interactions
  in sensitivity analysis: application to reservoir modeling, Mathematical
  Geosciences 46~(4) (2014) 493--511.

\bibitem{sobol1993}
I.~M. Sobol, Sensitivity estimates for nonlinear mathematical models,
  Mathematical Modelling and Computational Experiments 1~(4) (1993) 407--414.

\bibitem{saltelli2000sensitivity}
A.~Saltelli, K.~Chan, E.~M. Scott, et~al., Sensitivity analysis, Wiley New
  York, 2000.

\bibitem{jansen1999analysis}
M.~J. Jansen, Analysis of variance designs for model output, Computer Physics
  Communications 117~(1-2) (1999) 35--43.

\bibitem{Pujol2017}
G.~Pujol, B.~Iooss, A.~Janon, Sensitivity: Global sensitivity analysis of model
  outputs, R package \text{https://cran.r-project.or/package=sensitivity}
  (2017).

\bibitem{garnier2001}
P.~Garnier, C.~N{\'e}el, B.~Mary, F.~Lafolie, Evaluation of a nitrogen
  transport and transformation model in a bare soil, European Journal of Soil
  Science 52~(2) (2001) 253--268.

\end{thebibliography}


\section*{Appendix A: region-based indices for model $(Sign(X_1)\cdot |X_2|)$}
We derive for this model the expressions of $S\!I_i^C$ when $X_1$ and $X_2$ have independent uniform distributions over $[-1,1]$ for two parameterizations of an interval $C \subset [-1,1]$  : one using a single cutting value $y_c$ and another using two cutting values $y_{c1}$ and $y_{c2}$.  
\subsection*{A.1 Parameterization with one cutting value $y_c$}
In this case, we study $f(X_1,X_2) = \mathbb{1}_{sign(X_1)|X_2| \leq y_c} $
\subsubsection*{A.1.1 Derivation of $S\!I_1^C(y_c)$}
\paragraph{\textbf{Conditional expectation}} $\esp[f|X_1]= {1 \over 2} \int_{-1}^{1} \mathbb{1}_{sign(x_1)|x_2| \leq y_c} d x_2$:

If $x_1 < 0$: $\esp[f|X_1] = $ 
$\begin{cases}
1 & \mbox{ if } y_c \geq 0 \\
{1 \over 2} (\int_{-1}^{y_c} d x_2 +\int_{-y_c}^1 d x_2) = 1+y_c & \mbox{ if } y_c \leq 0 \\
\end{cases}$

If $x_1 > 0$: $\esp[f|X_1] = $ 
$\begin{cases}
{1 \over 2} \int_{-y_c}^{y_c} d x_2 = y_c & \mbox{ if } y_c \geq 0 \\
0 & \mbox{ if } y_c \leq 0 \\
\end{cases}$
\paragraph{\textbf{Variance of conditional expectation}} $ \var[\esp[f|X_1]]$

$\displaystyle {1 \over 2} \int_{-1}^1 \esp[f|x_1] dx_1 = {1 \over 2} (\int_{-1}^0 \esp[f|x_1] dx_1+ \int_0^1 \esp[f|x_1] dx_1) = {1+y_c \over 2}$

$\displaystyle {1 \over 2} \int_{-1}^1 \esp[f|x_1]^2 dx_1 = {1 \over 2} (\int_{-1}^0 \esp[f|x_1]^2 dx_1+ \int_0^1 \esp[f|x_1]^2 dx_1) = $ 
$\begin{cases} 
{1+y_c^2 \over 2} & \mbox{ if } y_c \geq 0 \\
{(1+y_c)^2 \over 2} & \mbox{ if } y_c \leq 0 \\
\end{cases}$
Therefore $\displaystyle \var[\esp[f|X_1]] = {1 \over 2} \int_{-1}^1 \esp[f|x_1]^2 dx_1 - ({1 \over 2} \int_{-1}^1 \esp[f|x_1] dx_1)^2={(1-|y_c|)^2  \over 4}$

\paragraph{\textbf{Variance}} 
$\var[f]=\esp[f^2]-\esp[f]^2$ with $\esp[f^2]= \esp[f]$ so $\displaystyle \var[f] = {1+y_c \over 2} - ({1+y_c \over 2})^2= {1 - y_c^2 \over 4}$
\paragraph{\textbf{Sobol index} }
 $\displaystyle S\!I_1^C(y_c)= {\var[\esp[f|x_1]] \over \var[f]} = {(1-|y_c|)^2 \over 1- y_c^2}$
\subsubsection*{A.1.1 Derivation of $S\!I_2(y_c)$}

\paragraph{\textbf{Conditional expectation}} $\esp[f|X_2]= {1 \over 2} \int_{-1}^{1} \mathbb{1}_{sign(x_1)|x_2| \leq y_c} d x_1$:

\begin{align}
 \esp[f|X_2] & = {1 \over 2} \int_{-1}^{1} \mathbb{1}_{sign(x_1)|x_2| \leq y_c} d x_1 
   =  {1 \over 2} (\int_{-1}^0 \mathbb{1}_{-|x_2| \leq y_c} d x_1 + \int_0^{1} \mathbb{1}_{|x_2| \leq y_c} d x_1) \notag \\
   & = {1 \over 2} (\mathbb{1}_{-|x_2| \leq y_c} + \mathbb{1}_{|x_2| \leq y_c}) = \begin{cases} 
{1 \over 2} \mathbb{1}_{-|x_2| \leq y_c} & \mbox{ if } y_c < 0 \\
{1 \over 2} (1+ \mathbb{1}_{|x_2| \leq y_c}) & \mbox{ if } y_c > 0 \\
\end{cases} \notag
 \end{align}
\paragraph{\textbf{Variance of conditional expectation}} $ \var[\esp[f|X_2]]$

$\esp[f|X_2]$ is independent of the sign of $x_2$ hence: $\displaystyle {1 \over 2} \int_{-1}^1 \esp[f|x_2] dx_2 = \int_0^1 \esp[f|x_2] dx_2= {1 + y_c \over 2}$

$\displaystyle {1 \over 2} \int_{-1}^1 \esp[f|x_2]^2 dx_2 = \int_0^1 \esp[f|x_2]^2 dx_2=$ $\begin{cases}
{1 \over 4} \int_0^1 \mathbb{1}_{-x_2 \leq y_c} d x_2 =  {1+y_c \over 4}& \mbox{ if } y_c < 0 \\
{1 \over 4} \int_0^1 (1+ 3 \cdot \mathbb{1}_{x_2 \leq y_c}) d x_2 = {1 + 3 y_c \over 4} & \mbox{ if } y_c > 0 \\
\end{cases}$

$\displaystyle \var[\esp[f|X_2]] = {1 \over 2} \int_{-1}^1 \esp[f|x_2]^2 dx_2 - ({1 \over 2} \int_{-1}^1 \esp[f|x_2] dx_2)^2={|y_c|-y_c^2 \over 4}$
\paragraph{\textbf{Sobol index} }
$\displaystyle S\!I_2^C(y_c)= {\var[\esp[f|X_2]] \over \var[f]} = {|y_c|-y_c^2 \over 1- y_c^2}= {|y_c| \over 1+|y_c|}$
\subsection*{A.2 Parameterization with two cutting values $y_{c1}$ and $y_{c2}$}
In this case, we study $f(x_1,x_2) =  \mathbb{1}_{sign(x_1)|x_2| \in [y_{c1},y_{c2}]} $ with $y_{c1} < y_{c2}$.
\subsubsection*{A.2.1 Derivation of $S\!I_1^C(y_{c1},y_{c2})$}
\paragraph{\textbf{Conditional expectation}}
$\esp[f|X_1] = {1 \over 2} \int_{-1}^{1} \mathbb{1}_{sign(x_1)|x_2| \in [y_{c1},y_{c2}]} d x_2$ 

If $x_1 < 0$: $\esp[f|X_1] = $
$\begin{cases}
0 & \mbox{ if } y_{c1} \geq 0 \\
{1 \over 2} (\int_{y_{c1}}^0 d x_2 +\int_0^{-y_{c1}} d x_2) = -y_{c1} & \mbox{ if } y_{c1} \leq 0 \leq y_{c_2} \\
y_{c2}-y_{c1} & \mbox{ if } y_{c2} \leq 0 \\
\end{cases}$

If $x_1 > 0$: $\esp[f|X_1] = $ 
$\begin{cases}
y_{c2}-y_{c1} & \mbox{ if } y_{c1} \geq 0 \\
{1 \over 2} \int_{-y_{c2}}^{y_{c2}} d x_2 = y_{c2} & \mbox{ if } y_{c1} \leq 0 \leq y_{c_2} \\
0 & \mbox{ if } y_{c2} \leq 0 \\
\end{cases}$
\paragraph{\textbf{Variance of conditional expectation}} $ \var[\esp[f|X_1]]$

$\displaystyle {1 \over 2} \int_{-1}^1 \esp[f|x_1] dx_1 = {1 \over 2} (\int_{-1}^0 \esp[f|x_1] dx_1+ \int_0^1 \esp[f|x_1] dx_1) = {y_{c2}-y_{c1} \over 2}$

$\displaystyle {1 \over 2} \int_{-1}^1 \esp[f|x_1]^2 dx_1 = {1 \over 2} (\int_{-1}^0 \esp[f|x_1]^2 dx_1+ \int_0^1 \esp[f|x_1]^2 dx_1) = $ 
$\begin{cases}
{(y_{c2}-y_{c1})^2 \over 2} & \mbox{ if } y_{c2} \leq 0 \mbox{ or } y_{c1} \geq 0 \\
{y_{c1}^2+y_{c2}^2 \over 2} & \mbox{ if } y_{c1} \leq 0 \leq y_{c_2} \\
\end{cases}$

Therefore $\displaystyle V[\esp[f|x_1]] = {1 \over 2} \int_{-1}^1 \esp[f|x_1]^2 dx_1 - ({1 \over 2} \int_{-1}^1 \esp[f|x_1] dx_1)^2={(|y_{c2}|-|y_{c1}|)^2  \over 4}$

\paragraph{\textbf{Variance }} $\var[f]=\esp[f^2]-\esp[f]^2$ with $\esp[f^2]= \esp[f]$ so: $\displaystyle \var[f] = {y_{c2}-y_{c1} \over 2} - ({y_{c2}-y_{c1} \over 2})^2= {y_{c2}-y_{c1} \over 4} (2-y_{c_2}+y_{c_1})$

\paragraph{\textbf{Sobol index}}
$\displaystyle SI_1^C(y_{c1},y_{c2})= {\var[\esp[f|x_1]] \over \var[f]} = {(|y_{c2}|-|y_{c1}|)^2 \over (y_{c2}-y_{c1}) (2-y_{c_2}+y_{c_1})}$

If $y_{c1}$ and $y_{c2}$ have the same sign:
$SI_1^C(y_{c1},y_{c2}) = {y_{c2}-y_{c1} \over 2-y_{c_2}+y_{c_1}}$ 

If $y_{c1}$ and $y_{c2}$ have opposite sign:
$SI_1^C(y_{c1},y_{c2}) = {(y_{c2}+y_{c1})^2 \over (y_{c2}-y_{c1}) (2-y_{c_2}+y_{c_1})}$

\subsubsection*{A.2.2 Derivation of $S\!I_2^C(y_{c1},y_{c2})$}
\paragraph{\textbf{Conditional expectation}}

\begin{align}
 \esp[f|X_2] & = {1 \over 2} \int_{-1}^{1} \mathbb{1}_{sign(x_1)|x_2| \in [y_{c1},y_{c2}]} d x_1  =  {1 \over 2} (\int_{-1}^0 \mathbb{1}_{-|x_2| \in [y_{c1},y_{c2}]} d x_1 + \int_0^{1} \mathbb{1}_{|x_2| \in [y_{c1},y_{c2}]} d x_1) \notag \\
   & = {1 \over 2} (\mathbb{1}_{-|x_2| \in [y_{c1},y_{c2}]} + \mathbb{1}_{|x_2| \in [y_{c1},y_{c2}]})  = \begin{cases} 
{1 \over 2} \mathbb{1}_{|x_2| \in [y_{c1},y_{c2}]} & \mbox{ if } y_{c1} \geq 0 \\
{1 \over 2} (\mathbb{1}_{-|x_2| \in [y_{c1},0]} + \mathbb{1}_{|x_2| \in [0,y_{c2}]}) & \mbox{ if } y_{c1} \leq 0 \leq y_{c_2} \\
{1 \over 2} \mathbb{1}_{-|x_2| \in [y_{c1},y_{c2}]} & \mbox{ if } y_{c2} \leq 0 \\
\end{cases} \notag
 \end{align}
 \paragraph{\textbf{Variance of conditional expectation}} $ \var[\esp[f|X_2]]$

$\esp[f|X_2]$ is independent of the sign of $X_2$ hence: $\displaystyle {1 \over 2} \int_{-1}^1 \esp[f|x_2] dx_2 = \int_0^1 \esp[f|x_2] dx_2= {y_{c2}-y_{c1} \over 2}$
\begin{align}
& {1 \over 2} \int_{-1}^1 \esp[f|x_2]^2 dx_2 = \int_0^1 \esp[f|x_2]^2 dx_2 \notag \\
     = & \begin{cases}
{1 \over 4} \mathbb{1}_{|x_2| \in [|y_{c1}|,|y_{c2}|]} = {y_{c2}-y_{c1} \over 4}  & \mbox{ if } y_{c2} \leq 0 \mbox{ or } y_{c1} \geq 0 \\
{1 \over 4} (\mathbb{1}_{-|x_2| \in [y_{c1},0]} + \mathbb{1}_{|x_2| \in [0,y_{c2}]}+ 2 \cdot  \mathbb{1}_{|x_2| \in [0,-y_{c1}]\cap [0,y_{c2}]})= {y_{c2}-y_{c1} +2 \min(|y_{c1}|,y_{c2})\over 4} & \mbox{ if } y_{c1} \leq 0 \leq y_{c_2} \\
\end{cases} \notag
\end{align}
Therefore
\begin{align}
 \var[\esp[f|X_2]] = & {1 \over 2} \int_{-1}^1 \esp[f|x_2]^2 dx_2 - ({1 \over 2} \int_{-1}^1 \esp[f|x_2] dx_2)^2 \notag \\
 = &
\begin{cases}
 {(y_{c2}-y_{c1})(1-y_{c2}+y_{c1}) \over 4}  & \mbox{ if } y_{c2} \leq 0 \mbox{ or } y_{c1} \geq 0 \\
 {(y_{c2}-y_{c1})(1-y_{c2}+y_{c1}) +2 \min(|y_{c1}|,y_{c2})\over 4} & \mbox{ if } y_{c1} \leq 0 \leq y_{c_2} \\
\end{cases}\notag
\end{align}

\paragraph{\textbf{Sobol index}}
$\displaystyle S\!I_2^C(y_{c1},y_{c2})= {\var[\esp[f|x_2]] \over \var[f]} $

$\displaystyle S\!I_2^C(y_{c1},y_{c2})=\begin{cases}
 {1-y_{c2}+y_{c1} \over 2-y_{c_2}+y_{c_1} }  & \mbox{ if } y_{c2} \leq 0 \mbox{ or } y_{c1} \geq 0 \\
 {1-y_{c2}+y_{c1} \over 2-y_{c_2}+y_{c_1} } + 2 {\min(|y_{c1}|,y_{c2})\over (y_{c_2}-y_{c_1})(2-y_{c_2}+y_{c_1})} & \mbox{ if } y_{c1} \leq 0 \leq y_{c_2} \\
\end{cases}$



\section*{Appendix B: Proof of Proposition \ref{prop_hist}}
We recall the expression of the discretized version of the first order sensitivity-based criterion used to performed the clustering based on single membership functions for an input $X$ whose conditional distribution given $Y \in C$ has an $n_x$ bins histogram noted $h^{C}$:
  $$\widetilde{S\!I}^C = {n_x \over \displaystyle \sum_{j=1}^{n_x} h_j^C (N- \sum_{j=1}^{n_x} h_j^C)} \sum_{i=1}^{n_x} (h_i^C - {1 \over n_x} \displaystyle \sum_{j=1}^{n_x} h_j^C)^2$$

Let us consider a cluster $C_0$ to which we either add a set of points with histogram  $h$ to form a cluster $C_1$ or two set of points with respective histograms $h$ and $h'=\theta h$ forming a cluster $C_2$. The criteria for these three sets are  $\widetilde{S\!I}^{C_0},\widetilde{S\!I}^{C_1},\widetilde{S\!I}^{C_2}$.

We suppose that adding to $C_0$ the set of points with histogram $h$ improves the clustering $(C_0,\overline{C_0})$, so that $\widetilde{S\!I}^{C_0} \leq \widetilde{S\!I}^{C_1}$. Then the principle of the proof is to show that there is a greater improvement in adding also the set with a perfectly correlated histogram $h'$, ie that $\widetilde{S\!I}^{C_1} < \widetilde{S\!I}^{C_2}$. Finally, supposing now then that there are two set $C_k$ and $C_k'$ with perfectly correlated histograms, there are necessarily of the same side of the optimal partition made by grouping elementary clusters, otherwise grouping $C_k$ and $C_k'$  would strictly improve the criterion, which is not possible. 

So we assume now that $\widetilde{S\!I}^{C_0} \leq \widetilde{S\!I}^{C_1}$ and want to show that  $\widetilde{S\!I}^{C_1} <  \ \widetilde{S\!I}^{C_2}$ :
\begin{align}
  \widetilde{S\!I}^{C_0} & =  {n_x \over \displaystyle \sum_{j=1}^{n_x} h_j^{C_0} (N- \sum_{j=1}^{n_x} h_j^{C_0})} \sum_{i=1}^{n_x} (h_i^{C_0} - {1 \over n_x} \displaystyle\sum_{j=1}^{n_x} h_j^{C_0})^2 \notag \\
  \widetilde{S\!I}^{C_1} & =  {n_x \over \displaystyle \sum_{j=1}^{n_x} (h_j^{C_0}+h_j) (N- \sum_{j=1}^{n_x}(h_j^{C_0}+h_j) )} \sum_{i=1}^{n_x} (h_i^{C_0}+h_i - {1 \over n_x} \displaystyle \sum_{j=1}^{n_x}(h_j^{C_0}+h_j ) )^2 \notag \\
  \widetilde{S\!I}^{C_2} & =  {n_x \over \displaystyle \sum_{j=1}^{n_x} (h_j^{C_0}+(1+\theta)h_j) (N- \sum_{j=1}^{n_x}(h_j^{C_0}+(1+\theta)h_j) )} \sum_{i=1}^{n_x} (h_i^{C_0}+(1+\theta)h_i - {1 \over n_x} \displaystyle
            \sum_{j=1}^{n_x}(h_j^{C_0}+(1+\theta)h_j) )^2 \notag
\end{align}

We denote: $\displaystyle N_0 = {N \over n_x}, H_0 = \displaystyle {1 \over n_x} \sum_{j=1}^{n_x} h_j^{C_0}, H_1 = {1 \over n_x} \sum_{j=1}^{n_x} h_j$ and $u=h^{C_0}-H_0, v = h-H_1$.
\begin{align}
  n_x \widetilde{S\!I}^{C_0} & =  {1 \over \displaystyle H_0 (N_0- H_0)} \sum_{i=1}^{n_x} u_i^2 \notag \\
  n_x  \widetilde{S\!I}^{C_1} & =  {1 \over \displaystyle (H_0+H_1) (N_0- (H_0+H_1) )} \sum_{i=1}^{n_x} (u_i+v_i)^2 \notag \\
  n_x  \widetilde{S\!I}^{C_2} & =  {1 \over \displaystyle (H_0+(1+\theta) H_1) (N_0- (H_0+(1+\theta) H_1) )} \sum_{i=1}^{n_x} (u_i+(1+\theta) v_i )^2 \notag
\end{align}
Assuming $\widetilde{S\!I}^{C_0} \leq \widetilde{S\!I}^{C_1}$ :
\begin{align}
  \label{uv}
  ||u+v||^2 \geq & {(H_0+H_1)(N_0- (H_0+H_1) ) \over  H_0 (N_0- H_0) }||u||^2
                   \end{align}
Assuming $\displaystyle H_0+(1+\theta) H_1 < N_0$, we consider $\displaystyle {\widetilde{S\!I}^{C_2} \over \widetilde{S\!I}^{C_1}}$:
\begin{align}
  {\widetilde{S\!I}^{C_2} \over \widetilde{S\!I}^{C_1}} = & \ {||u+(1+\theta)v||^2 \over ||u+v||^2} {(H_0+H_1) (N_0- (H_0+H_1) ) \over (H_0+(1+\theta) H_1)(N_0- (H_0+(1+\theta) H_1) )} \notag \\
   = & \ {||u+v||^2 +(\theta^2+2\theta) ||v||^2+ 2\theta u.v \over ||u+v||^2} {(H_0+H_1) (N_0- (H_0+H_1) ) \over (H_0+(1+\theta) H_1) (N_0- (H_0+(1+\theta) H_1) )} \notag
  \end{align}
  From $\displaystyle (\theta^2+2\theta)||v||^2+ 2\theta u.v  = \theta( ||u+v||^2-||u||^2 + (1+\theta)||v||^2) $ and Equation (\ref{uv}) we deduce:
\begin{align}
  (\theta^2+2\theta)||v||^2+ 2\theta u.v \geq & \theta (1- {H_0 (N_0- H_0) \over (H_0+H_1) (N_0- (H_0+H_1) )}) ||u+v||^2  \notag \\
  ||u+v||^2+  (\theta^2+2\theta)||v||^2+ 2 \theta u.v \geq & (1+\theta- {\theta H_0 (N_0- H_0) \over (H_0+H_1) (N_0- (H_0+H_1) )}) ||u+v||^2  \notag
\end{align}
Hence:
\begin{align}
  {\widetilde{S\!I}^{C_2} \over \widetilde{S\!I}^{C_1}} \geq & \ {(1+\theta) (H_0+H_1)(N_0- (H_0+H_1) )- \theta H_0 (N_0-H_0)   \over (H_0+(1+\theta) H_1) (N_0- (H_0+(1+\theta) H_1))} \notag \\
  {\widetilde{S\!I}^{C_2} \over \widetilde{S\!I}^{C_1}} \geq & \ {(H_0+(1+\theta) H_1) (N_0- (H_0+H_1) )- \theta H_0 H_1   \over (H_0+(1+\theta) H_1) (N_0- (H_0+H_1) )-\theta H_1 (H_0+(1+\theta) H_1)} \notag \\
  {\widetilde{S\!I}^{C_2} \over \widetilde{S\!I}^{C_1}} \geq & \ 1+ {\theta (1+\theta) H_1^2 \over (H_0+(1+\theta) H_1) (N_0- (H_0+(1+\theta)H_1) )} >1 \notag
\end{align}
so $\widetilde{S\!I}^{C_1} < \widetilde{S\!I}^{C_2}$.

\end{document}